\documentclass[11pt,a4j]{article}

\usepackage{lineno,hyperref}
 \usepackage{amsmath,amssymb,amsfonts}
 \usepackage{amsthm}
 \usepackage{latexsym}
 \usepackage{array}
 \usepackage{color}
 
 \theoremstyle{definition}
\newtheorem{theorem}{Theorem}[section]
\newtheorem{corollary}[theorem]{Corollary}
\newtheorem{prop}[theorem]{Proposition}

\newtheorem{rem}{Remark}
\newtheorem{example}{Example}
\newtheorem{defn}{Definition}

\allowdisplaybreaks[4] 



\title{Compatibility between Jacobi structures and pseudo-Riemannian cometrics on Jacobi algebroids}

\author{Naoki Kimura \thanks{Department of Applied Mathematics, School of Fundamental Science and Engineering, Waseda University, \textup{3-4-1} Okubo, Shinjuku, Tokyo \textup{169-8555}, Japan} and Tomoya Nakamura  \thanks{Academic Support Center, Kogakuin University, \textup{2665-1}, Nakano-cho, Hachioji-shi, Tokyo, Japan}
 \\ email: 
 \href{mailto:noverevitheuskyk@toki.waseda.jp}
 {noverevitheuskyk@toki.waseda.jp}\\
 \href{mailto:kt13676@ns.kogakuin.ac.jp}
 {kt13676@ns.kogakuin.ac.jp}}
\date{}






\begin{document}

\maketitle

\begin{abstract}
We define compatibility between Jacobi structures and pseudo-Riemannian cometrics on Jacobi algebroids.
This notion is a generalization of the compatibility between Poisson structures and pseudo-Riemannian cometrics on manifolds, which was defined by Boucetta \cite{Bou1}.
We show that the compatibility with a cometric is ``preserved'' by the Poissonization of a Jacobi structure.
Furthermore, we prove that for a contact pseudo-metric structure on a manifold, satisfying the compatibility condition is equivalent to being a Sasakian pseudo-metric structure.

{\flushleft{{\bf Keywords:} Poisson manifold; Jacobi manifold; Lie algebroid; Jacobi algebroid; contact manifold.}}

{\flushleft{{\bf MSC:}  53D17; 53D15; 53D10}}
  \end{abstract}




\section{Introduction}
Jacobi manifolds were introduced by Lichnerowicz and Kirillov independently as a generalization of Poisson manifolds.
A Jacobi manifold is also a generalization of a contact manifold.  
The Poissonization of a Jacobi structure on a manifold $M$ is an operation which gives a Poisson structure on the manifold $M \times \mathbb{R}$. 
The obtained Poisson structure on $M \times \mathbb{R}$ is also called the Poissonization of a given Jacobi structure on $M$.
For a contact manifold, the Poissonization is an equivalent operation to the symplectization of a given contact manifold.
The Poissonization plays a central role in the study of Jacobi manifolds since Poisson manifolds are less complicated in various aspects than Jacobi manifolds.
Notice that the Poissonization extends to a Jacobi structure on a Jacobi algebroid, which is a generalization of a Jacobi structure on a manifold.

Boucetta \cite{Bou1} defined compatibility between Poisson structures and pseudo-Riemannian cometrics on manifolds by using an affine connection on the cotangent bundle.
He showed that if a non-degenerate Poisson structure has a compatible cometric, the corresponding symplectic form is a K\"{a}hler form.
Due to this result, a Poisson structure with a compatible cometric is considered as a generalization of a K\"{a}hler structure.
The compatibility between Poisson structures and cometrics have been extensively studied. 
For instance, the case for the Lie-Poisson structure on the dual space of a Lie algebra is studied in \cite{Bou2} \cite{Chen}.

In this paper, we define compatibility between Jacobi structures and pseudo-Riemannian cometrics on Jacobi algebroids.
This notion is a generalization of the compatibility between Poisson structures and pseudo-Riemannian cometrics on manifolds.
Compatibility between Jacobi structures and pseudo-Riemannian cometrics on manifolds was already defined in \cite{AZ1}. However that is different from the definition in this paper. 
In their definition, the compatibility is defined by using the cotangent bundle with a Lie algebroid structure associated with a Jacobi structure.
Meanwhile, in our definition, that is defined by using the Whitney sum of the cotangent bundle and the trivial line bundle with the standard Jacobi algebroid structure. In terms of the Poissonization of a Jacobi structure, it is more natural to consider our definition than theirs. In fact, 
we show that the compatibility with a cometric is ``preserved" under the Poissonization of a Jacobi structure.
Furthermore, we prove that for a contact pseudo-metric structure on a manifold, satisfying the compatibility condition is equivalent to being a Sasakian pseudo-metric structure. 
Therefore, a Jacobi structure with a compatible cometric is considered as a generalization of a Sasakian structure.

This paper is organized as follows.
In Section 2, we review the definitions of several notions such as Lie algebroids, Poisson structures, Jacobi algebroids and Jacobi structures. 
In addition, we explain the Poissonization of a Jacobi structure.
In Section 3, we recall the compatibility between Poisson structures and pseudo-Riemannian cometrics defined by Boucetta \cite{Bou1}. 
After that, as a generalization of that notion, we define compatibility between Jacobi structures and pseudo-Riemannian cometrics on Jacobi algebroids.
We show that the compatibility with a cometric is ``preserved" under the Poissonization of a Jacobi structure.
At the end, we state that a Sasakian pseudo-metric structure is regarded as a special case of a Jacobi structure with a compatible cometric. 

\section*{Acknowledgement}
The first author is supported by a Waseda University Grant for Special Research Projects (Project number: 2022C-434).

\section{Preliminaries}\label{Preliminaries}

In this section, we recall the definitions and properties of Lie algebroids, Poisson structures, Jacobi algebroids and Jacobi structures. See \cite{IM} for details on Jacobi algebroids and Jacobi structures.

\subsection{Lie algebroids and Poisson structures}\label{Lie algebroids and Poisson structures}

A {\it skew algebroid} over a manifold $M$ is a vector bundle
$A\rightarrow M$ equipped with a skew symmetric $\mathbb{R}$-bilinear map $[\cdot,\cdot]_{A}:\Gamma (A)\times \Gamma(A)\rightarrow \Gamma (A)$, called the {\it bracket}, and a bundle map $\rho_{A}:A\rightarrow TM$ over $M$, called the {\it anchor}, satisfying the following condition: for any $X,Y$ in $\Gamma (A)$ and $f$ in $C^{\infty }(M)$,
 \begin{align*}
[X,fY]_{A}=f[X,Y]_{A}+(\rho_{A}(X)f)Y,
 \end{align*}
where we denote the map $\Gamma(A)\rightarrow \Gamma(TM)=\mathfrak{X}(M)$ induced by the anchor, the same symbol $\rho_{A}$. A {\it Lie algebroid} over a manifold $M$ is a skew algebroid $(A,[\cdot,\cdot]_A,\rho_A)$ such that the bracket satisfies the Jacobi identity, i.e., $[\cdot,\cdot]_A$ is a Lie bracket on $\Gamma (A)$. For any Lie algebroid $(A,[\cdot,\cdot]_A,\rho_A)$ over $M$, it follows that for any $X$ and $Y$ in $\Gamma(A)$,
 \begin{align*}
\rho_A([X,Y]_{A})=[\rho_A(X),\rho_A(Y)],
 \end{align*}
where the bracket on the right hand side is the usual Lie bracket on $\mathfrak{X}(M)$.

\begin{example}

For any manifold $M$, the tangent bundle $(TM,[\cdot,\cdot],\mbox{id}_{TM})$ is a Lie algebroid over $M$, where $[\cdot,\cdot]$ is the usual Lie bracket on the vector fields $\mathfrak{X}(M)=\Gamma(TM)$. 
%
\end{example}

Let $(A,[\cdot,\cdot]_A,\rho_A)$ be a skew algebroid over $M$. The {\it Schouten bracket} on $\Gamma (\Lambda ^*A)$ is defined similarly to the Schouten bracket $[\cdot ,\cdot ]$ on the multivector fields $\mathfrak{X}^*(M)$. That is, the Schouten bracket $[\cdot ,\cdot ]_A:\Gamma (\Lambda ^kA)\times \Gamma (\Lambda ^lA)\rightarrow \Gamma (\Lambda ^{k+l-1}A)$ is defined as the unique extension of the bracket $[\cdot ,\cdot ]_A$ on $\Gamma (A)$ such that
\begin{align*}
&[f,g]_A=0;\\
&[X,f]_A=\rho_A(X)f;\\
&[X,Y]_A\ \mbox{is the bracket on}\ \Gamma (A);\\
&[D_1,D_2\wedge D_3]_A=[D_1,D_2]_A\wedge D_3+(-1)^{\left(a_1+1\right)a_2}D_2\wedge [D_1, D_3]_A;\\
&[D_1,D_2]_A=-(-1)^{(a_1-1)(a_2-1)}[D_2,D_1]_A
\end{align*}
for any $f,g$ in $C^\infty(M)$, $X,Y$ in $\Gamma (A)$, $D_i$ in $\Gamma (\Lambda ^{a_{i}}A)$. The {\it differential} of the skew algebroid $A$ is an operator $d_A:\Gamma (\Lambda ^k A^*)\rightarrow \Gamma (\Lambda ^{k+1} A^*)$ defined by for any $\omega $ in $\Gamma (\Lambda^k A^*)$ and $X_0,\dots ,X_k$ in $\Gamma(A)$,
\begin{align}\label{A-gaibibun}
(d_A \omega)(X_0, \dots ,X_k)&=\sum_{i=0}^k(-1)^i\rho_A (X_i)(\omega(X_0,\dots ,\hat{X_i},\dots ,X_k))\nonumber \\
&\quad+\sum _{i<j}^{}(-1)^{i+j}\omega ([X_i,X_j]_A , X_0,\dots ,\hat{X_i},\dots ,\hat{X_j},\dots ,X_k).
\end{align}
If $(A,[\cdot,\cdot]_A,\rho_A)$ is a Lie algebroid, $d_A^2=0$ holds. For any $X$ in $\Gamma (A)$, the {\it Lie derivative\index{Lie derivative}} $\mathcal{L}_X^A:\Gamma (\Lambda ^k A^*)\rightarrow \Gamma (\Lambda ^k A^*)$ is defined by the {\it Cartan formula} $\mathcal{L}_X^A:=d_A \iota _X +\iota _X d_A$ and $\mathcal{L}_X^A$ are extended on $\Gamma (\Lambda ^*A)$ in the same way as the usual Lie derivative $\mathcal{L}_X$ respectively. Then it follows that $\mathcal{L}_X^AD=[X,D]_A$ for any $D$ in $\Gamma (\Lambda^{*}A)$. We call a $d_{A}$-closed $2$-cosection $\omega$, i.e., $d_{A}\omega=0$, a {\it presymplectic structure} on $(A,[\cdot,\cdot]_A,\rho_A)$. A presymplectic structure $\omega$ is called a {\it symplectic structure} if $\omega$ is non-degenerate.

\begin{rem}
In the definition of the Schouten bracket, some authors use a condition 
\begin{align}\label{schouten betsu def}
[D_1,D_2]_A=(-1)^{a_1a_2}[D_2,D_1]_A
\end{align}
for any $D_i$ in $\Gamma (\Lambda ^{a_{i}}A)$ instead of the condition $[D_1,D_2]_A=-(-1)^{(a_1-1)(a_2-1)}$ $[D_2,D_1]_A$.
\end{rem}

\begin{example}\label{A oplus R}
Let $A$ be a vector bundle over a manifold $M$ and set $A\oplus \mathbb{R}:=A\oplus(M\times \mathbb{R})$. Then the sections $\Gamma (\Lambda^k(A\oplus \mathbb{R}))$ and $\Gamma (\Lambda^k(A\oplus \mathbb{R})^*)$ can be identified with $\Gamma (\Lambda^kA)\times \Gamma (\Lambda^{k-1}A)$ and $\Gamma (\Lambda^kA^*)\times \Gamma (\Lambda^{k-1}A^*)$ as follows:
\begin{align}
(P,Q)((\alpha_1,f_1),&\dots,(\alpha_k,f_k))\nonumber\\
                     &=P(\alpha_1,\dots,\alpha_k)+\sum_i(-1)^{i+1}f_iQ(\alpha_1,\dots,\hat{\alpha}_i,\dots,\alpha_k),\label{multi-vector field formula}\\
(\alpha,\beta)((X_1,f_1),&\dots,(X_k,f_k))\nonumber\\
                         &=\alpha(X_1,\dots,X_k)+\sum_i(-1)^{i+1}f_i\beta(X_1,\dots,\hat{X}_i,\dots,X_k) \label{differential form formula}
\end{align}
for any $(P,Q)$ in $\Gamma (\Lambda^kA)\times \Gamma (\Lambda^{k-1}A)$, $(\alpha,\beta)$ in $\Gamma (\Lambda^kA^*)\times \Gamma (\Lambda^{k-1}A^*)$, $(\alpha _i,f_i)$ in $\Gamma (A^*)\times C^\infty(M)$ and $(X_i,f_i)$ in $\Gamma (A)\times C^\infty(M)$. Moreover under the identifications, the exterior products are given by
\begin{align*}
(P_1,Q_1)\wedge(P_2,Q_2)&=(P_1\wedge P_2,Q_1\wedge P_2+(-1)^{a_1}P_1\wedge Q_2),\\
(\alpha_1,\beta_1)\wedge(\alpha_2,\beta_2)&=(\alpha_1\wedge \alpha_2,\beta_1\wedge \alpha_2+(-1)^{a_1}\alpha_1\wedge \beta_2)
\end{align*}
for any $(P_i,Q_i)$ in $\Gamma (\Lambda^{a_i}A)\times \Gamma (\Lambda^{a_i-1}A)$ and $(\alpha_i,\beta_i)$ in $\Gamma (\Lambda^{a_i}A^*)\times \Gamma (\Lambda^{a_i-1}A^*)$. Now, assume that $A$ is a skew (resp. Lie) algebroid over $M$. Then $(A\oplus \mathbb{R},[\cdot,\cdot]_{A\oplus\mathbb{R}},\rho_{A}\circ\mbox{pr}_1)$ is also a skew (resp. Lie) algebroid over $M$, where the bracket $[\cdot,\cdot]_{A\oplus\mathbb{R}}$ is defined by
\begin{align}\label{A oplus R no kakko}
[(X,f),(Y,g)]_{A\oplus\mathbb{R}}&:=([X,Y]_{A},\rho_{A}(X)g-\rho_{A}(Y)f)
\end{align}
and the map $\mbox{pr}_1:A\oplus \mathbb{R}\rightarrow A$ is the canonical projection to the first factor. In this case, the differential $d_{A\oplus \mathbb{R}}$ of the skew (resp. Lie) algebroid $A\oplus \mathbb{R}$ and the Schouten bracket $[\cdot,\cdot]_{A\oplus \mathbb{R}}$ are given by
\begin{align*}
d_{A\oplus \mathbb{R}}(\alpha,\beta)&=(d_{A}\alpha,-d_{A}\beta),\\
[(P_1,Q_1),(P_2,Q_2)]_{A\oplus\mathbb{R}}&=([P_1,P_2]_A,(-1)^{k+1}[P_1,Q_2]_A-[Q_1,P_2]_A)
\end{align*}
for any $(\alpha,\beta)$ in $\Gamma (\Lambda^{k}A^{*})\times \Gamma (\Lambda^{k-1}A^{*})$ and $(P_i,Q_i)$ in $\Gamma (\Lambda^{k}A)\times \Gamma (\Lambda^{k-1}A)$. 
\end{example}

{\it A Poisson structure} on a skew (resp. Lie) algebroid $A$ over a manifold $M$ is a $2$-section $\pi$ in $\Gamma(\Lambda^2 A)$ satisfying $[\pi,\pi]_A=0$. For any $2$-section $\pi $ in $\Gamma (\Lambda^2A)$, we define a skew-symmetric bilinear bracket $[\cdot,\cdot]_{\pi} $ on $\Gamma (A^*)$ by for any $\xi,\eta$ in $\Gamma (A^*)$,
\begin{align}
[\xi,\eta]_{\pi}:=\mathcal{L}_{\pi^\sharp \xi}^{A}\eta -\mathcal{L}_{\pi^\sharp \eta}^{A}\xi -d_{A}\langle \pi^\sharp \xi,\eta \rangle,
\end{align}
where a bundle map $\pi^\sharp:A^{*}\rightarrow A$ over $M$ is defined by $\langle \pi^\sharp \xi,\eta\rangle:=\pi(\xi,\eta)$. Then a triple $(A^*,[\cdot,\cdot]_\pi,\rho_\pi)$, where $\rho_\pi:=\rho_A\circ\pi^\sharp$, is a skew algebroid. We denote $(A^*,[\cdot,\cdot]_\pi,\rho_\pi)$ by $A^*_\pi$ and the differential of $A^*_\pi$ by $d_\pi$. Then $d_\pi D=[\pi,D]_A$ holds for any $D$ in $\Gamma(\Lambda^*A)$. Moreover it follows that
 \begin{align}\label{Jacobi structure bracket property}
\frac{1}{2}[\pi ,\pi ]_{A}(\xi,\eta,\cdot)=[\pi^\sharp \xi,\pi^\sharp \eta ]_{A}-\pi^\sharp[\xi,\eta]_{\pi}.
\end{align}
In the case that $(A, [\cdot,\cdot]_A,\rho_A)$ is a Lie algebroid, a skew algebroid $A^*_\pi$ is a Lie algebroid if and only if $\pi$ is Poisson. 

It is well known that there exists a one-to-one correspondence between symplectic structures and non-degenerate Poisson structures on a skew algebroid $(A, [\cdot,\cdot]_A,\rho_A)$. In fact, for a non-degenerate Poisson structure $\pi$, a $2$-cosection $\omega_{\pi}$ characterized by $\omega_{\pi}^{\flat}=-(\pi^{\sharp})^{-1}$ is symplectic, where for any $2$-cosection $\Omega$, a bundle map $\Omega^\flat:A\rightarrow A^{*}$ over $M$ is defined by $\langle \Omega^\flat X,Y\rangle:=\Omega(X,Y)$ for any $X$ and $Y$ in $\Gamma(A)$.

\subsection{Jacobi algebroids and Jacobi structures}\label{Jacobi algebroids and Jacobi structures}

A pair $(A,\phi_0)$ is a {\it Jacobi algebroid} over a manifold $M$ if $A=(A,[\cdot ,\cdot ]_A,\rho_A)$ is a Lie algebroid over $M$ and $\phi_0$ in $\Gamma(A^*)$ is $d_A$-closed, that is, $d_A\phi_{0}=0$. 

\begin{example}\label{trivial example}
For any Lie algebroid $A$ over $M$, we set $\phi_0:=0$. Then $(A,\phi_0)$ is a Jacobi algebroid. We call $\phi_0$ the {\it trivial Jacobi algebroid structure} on $A$. Therefore any Lie algebroid is a Jacobi algebroid.
\end{example}

\begin{example}
For a Lie algebroid $A\oplus \mathbb{R}$ in Example \ref{A oplus R}, We set $\phi_0:=(0,1)$ in $\Gamma(A^{*}\oplus \mathbb{R})=\Gamma (A^{*})\times C^{\infty}(M)$. Then $(A\oplus \mathbb{R},\phi_0)$ is a Jacobi algebroid.
\end{example}

For a Jacobi algebroid $(A,\phi_{0})$, there is the {\it $\phi_0$-Schouten bracket} $[\cdot,\cdot]_{A,\phi_0}$ on $\Gamma (\Lambda^*A)$ given by
\begin{align}\label{phi-bracket}
[D_1,D_2]_{A,\phi_0}:=[D_1,D_2]_A+(a_1-1)&D_1\wedge \iota _{\phi_0}D_2 \nonumber\\
                                                     &-(-1)^{a_1+1}(a_2-1)\iota_{\phi_0}D_1\wedge D_2
\end{align}
for any $D_i$ in $\Gamma(\Lambda ^{a_i}A)$, where $[\cdot,\cdot]_A$ is the Schouten bracket of the Lie algebroid $A$. The {\it $\phi_0$-differential} $d_{A,\phi_0}$ and the {\it $\phi_0$-Lie derivative} $\mathcal{L}_X^{A,\phi_0}$ are defined by
\begin{align*}
d_{A,\phi_0}\omega :=d_A\omega +\phi_0\wedge \omega,\quad \mathcal{L}_X^{A,\phi_0}:=\iota _X\circ d_{A,\phi_0}+d_{A,\phi_0}\circ \iota_X
\end{align*}
for any $\omega$ in $\Gamma (\Lambda ^*A^*)$ and $X$ in $\Gamma (A)$.
 
 We notice that
\begin{align*}
(d_{A,\phi_0}\omega )(X_0,&\dots,X_k)\\
                          &=\sum_i(-1)^{i+1}\rho_{A,\phi_0}(X_i)\omega(X_0,\dots,\hat{X}_i,\dots,X_k)\\
                          &\quad +\sum_{i<j}(-1)^{i+j}\omega([X_i,X_j]_A,X_0,\dots,\hat{X}_i,\dots,\hat{X}_j,\dots,X_k)
\end{align*}
for any $\omega$ in $\Gamma (\Lambda^kA^*)$ and $X_i$ in $\Gamma (A)$, and that
\begin{align*}
\mathcal{L}_X^{A,\phi_0}\omega=\mathcal{L}_X^A\omega+\langle\phi_0,X\rangle\omega
\end{align*}
for any $\omega$ in $\Gamma (\Lambda ^*A^*)$ and $X$ in $\Gamma (A)$. Here $\rho_{A,\phi_0}(X)f:=\rho_A(X)f+\langle \phi_0,X\rangle f$ for any $X$ in $\Gamma (A)$ and $f$ in $C^\infty(M)$. We call a $d_{A,\phi_0}$-closed $2$-cosection $\omega$, i.e., $d_{A,\phi_0}\omega=0$, a {\it $\phi_0$-presymplectic structure} on $(A,\phi_{0})$. A $\phi_0$-presymplectic structure $\omega$ is called a {\it $\phi_0$-symplectic structure} if $\omega$ is non-degenerate.

\begin{rem}
In the case using the condition (\ref{schouten betsu def}) in the definition of the Schouten bracket $[\cdot,\cdot]_A$, the $\phi_0$-Schouten bracket $[\cdot,\cdot]_{A,\phi_0}$ is given by
\begin{align*}
[D_1,D_2]_{A,\phi_0}:=[D_1,D_2]_A+(-1)^{a_1+1}(a_1-1)&D_1\wedge \iota _{\phi_0}D_2 \\
&-(a_2-1)\iota_{\phi_0}D_1\wedge D_2
\end{align*}
instead of (\ref{phi-bracket}).
\end{rem}

 \begin{example}\label{contact example}
We consider a Jacobi algebroid $(A,\phi_0)$ over $M$, where $A:=TM\oplus\mathbb{R}$ and $\phi_{0}:=(0,1)$ in $\Omega^{1}(M)\times C^{\infty}(M)$. Then any $\omega $ in $\Omega^{2}(M)\times \Omega^{1}(M)$ can be written as $\omega=(\alpha,\beta)\ (\alpha\in\Omega^{2}(M),\beta\in\Omega^{1}(M))$. Since
 \begin{align*}
d_{A,\phi_{0}}\omega=d_{TM\oplus\mathbb{R},(0,1)}(\alpha,\beta)=(d\alpha,\alpha-d\beta),
\end{align*}
$\omega$ is $(0,1)$-presymplectic on $(TM\oplus\mathbb{R},(0,1))$ if and only if $\omega =(d\beta,\beta)\ (\beta\in\Omega^{1}(M))$. Moreover setting $\dim M=2n+1$, we see that a $(0,1)$-presymplectic strucutre $\omega $ is non-degenerate if and only if $\beta\wedge (d\beta)^{n}\neq 0$, that is, $\beta$ is a {\it contact structure} on $M$. Therefore a $(0,1)$-symplectic structure on $(TM\oplus \mathbb{R},(0,1))$ is just a contact structure on $M$.
 \end{example}

A {\it Jacobi structure} on a Jacobi algebroid $(A,\phi_{0})$ is a $2$-section $\pi $ in $\Gamma (\Lambda^{2}A)$ satisfying the condition
\begin{equation}\label{Jacobi def equation}
[\pi,\pi]_{A,\phi_{0}}=0.
\end{equation}
For any $2$-section $\pi $ on $(A,\phi_0)$, we define a skew-symmetric bilinear bracket $[\cdot,\cdot]_{\pi,\phi_{0}} $ on $\Gamma (A^*)$ by for any $\xi,\eta$ in $\Gamma (A^*)$,
\begin{align}
[\xi,\eta]_{\pi,\phi_{0}}:=\mathcal{L}_{\pi^\sharp \xi}^{A,\phi_0}\eta -\mathcal{L}_{\pi^\sharp \eta}^{A,\phi_0}\xi -d_{A,\phi_0}\langle \pi^\sharp \xi,\eta \rangle.
\end{align}
Then a triple $(A^*,[\cdot,\cdot]_{\pi,\phi_{0}},\rho_\pi)$, where $\rho_\pi:=\rho_{A} \circ \pi^{\sharp}$, is a skew algebroid. 
Moreover it follows that
 \begin{align}\label{Jacobi structure bracket property}
\frac{1}{2}[\pi ,\pi ]_{A,\phi_0}(\xi,\eta,\cdot)=[\pi^\sharp \xi,\pi^\sharp \eta ]_{A}-\pi^\sharp[\xi,\eta]_{\pi,\phi_{0}}.
\end{align}
Then $A^*_{\pi,\phi_0} := (A^*, [\cdot,\cdot]_{\pi,\phi_0}, \rho_\pi)$ is a Lie algebroid over $M$ if and only if $\pi$ is Jacobi. Furthermore, in the case that $\pi$ is Jacobi, a pair $(A^*_{\pi,\phi_0}, X_0)$ is a Jacobi algebroid over $M$, where $X_0:=-\pi^{\sharp}\phi_0$ in $\Gamma(A)$. We call it {\it the Jacobi algebroid induced by a Jacobi structure $\pi$ on $(A,\phi_0)$}.

\begin{example}[Poisson structures]
For any Lie algebroid $A$ equipped with the trivial Jacobi algebroid structure $0$, it follows that $[\cdot,\cdot]_{A,0}=[\cdot,\cdot]_{A}$. Hence Jacobi structures on $(A,0)$ are just Poisson structures on $A$. In this case, the Lie algebroid $A^*_{\pi,0}$ induced by a Jacobi structure $\pi$ on $(A,0)$ coincides with the Lie algebroid $A^*_{\pi}$ induced by a Poisson structure $\pi$ on $A$.  
\end{example}

\begin{example}\label{Jacobi manifold}
Let $A$ be a Lie algebroid over $M$, $\Lambda $ a $2$-section on $A$ and $E$ a section on $A$ satisfying
\begin{align*}
[\Lambda ,\Lambda ]_{A}=2E\wedge \Lambda ,\quad [E,\Lambda ]_{A}=0.
\end{align*}
Then a pair $(\Lambda ,E)$ in $\Gamma (\Lambda ^{2}A)\oplus \Gamma (A)\cong \Gamma (\Lambda ^{2}(A\oplus \mathbb{R}))$ is a Jacobi structure on a Jacobi algebroid $(A\oplus \mathbb{R},(0,1))$, i.e., it satisfies $[(\Lambda ,E),(\Lambda ,E)]_{A\oplus \mathbb{R},(0,1)}=0$. When $(\Lambda ,E)$ is a Jacobi structure on $(TM\oplus \mathbb{R},(0,1))$, we call it a {\it Jacobi structure on $M$} and a triple $(M,\Lambda,E)$ a {\it Jacobi manifold}. If $\pi $ is a Poisson structure on $A$, Then $(\pi,0)$ is a Jacobi structure on $(A\oplus \mathbb{R},(0,1))$.
\end{example}

It is well known that there exists a one-to-one correspondence between $\phi_{0}$-symplectic structures on $(A,\phi_{0})$ and non-degenerate Jacobi structures on $(A,\phi_{0})$. In fact, for a non-degenerate Jacobi structure $\pi$ on $(A,\phi_{0})$, a $2$-cosection $\omega_{\pi}$ characterized by $\omega_{\pi}^{\flat}=-(\pi^{\sharp})^{-1}$ is $\phi_{0}$-symplectic on $(A,\phi_{0})$. In particular, there exists a one-to-one correspondence between contact structures on $M$ and non-degenerate Jacobi structures on $M$. If $\eta $ is contact on $M$, then $(\Lambda ,E)$ is Jacobi on $M$, where
\begin{align*}
\Lambda(\alpha,\beta)&:=(d\eta)((\eta^\flat)^{-1}(\alpha),(\eta^\flat)^{-1}(\beta))\quad (\alpha ,\beta \in \Omega^1(M)),\\
E&:=\xi.
\end{align*}
Here $\eta^\flat:\mathfrak{X}(M)\rightarrow\Omega^1(M)$ is a linear isomorphism given by
\begin{align*}
\eta^\flat(X):=\iota_Xd\eta +\langle\eta,X\rangle\eta\quad (X\in \mathfrak{X}(M))
\end{align*}
and $\xi$ in $\mathfrak{X}(M)$ is the Reeb vector field of $\eta $.

Let $(A,\phi_0)$ be a Jacobi algebroid over $M$. We set $\tilde{A}:=A\times \mathbb{R}$. Then $\tilde{A}$ is a vector bundle over $M\times \mathbb{R}$. The sections $\Gamma (\tilde{A})$ can be identified with the set of time-dependent sections of $A$. Under this identification, we can define two Lie algebroid structures $([\cdot,\cdot\hat{]}_A^{\phi_0},\hat{\rho}_A^{\phi_0})$ and $([\cdot,\cdot\bar{]}_A^{\phi_0},\bar{\rho}_A^{\phi_0})$ on $\tilde{A}$, where for any $\tilde{X}$ and $\tilde{Y}$ in $\Gamma(\tilde{A})$,
\begin{align}
\label{hat kakko} [\tilde{X},\tilde{Y}\hat{]}_A^{\phi_0}&:=e^{-t}\left([\tilde{X},\tilde{Y}]_A+\langle\phi_0,\tilde{X}\rangle\left(\frac{\partial \tilde{Y}}{\partial t}-\tilde{Y}\right)-\langle\phi_0,\tilde{Y}\rangle\left(\frac{\partial \tilde{X}}{\partial t}-\tilde{X}\right)\right),\\ 
\label{hat anchor} \hat{\rho}_A^{\phi_0}(\tilde{X})&:=e^{-t}\left(\rho_A(\tilde{X})+\langle \phi_0,\tilde{X}\rangle \frac{\partial}{\partial t}\right),\\
\label{bar kakko} [\tilde{X},\tilde{Y}\bar{]}_A^{\phi_0}&:=[\tilde{X},\tilde{Y}]_A+\langle\phi_0,\tilde{X}\rangle\frac{\partial \tilde{Y}}{\partial t}-\langle\phi_0,\tilde{Y}\rangle\frac{\partial \tilde{X}}{\partial t},\\ 
\label{bar anchor} \bar{\rho}_A^{\phi_0}(\tilde{X})&:=\rho_A(\tilde{X})+\langle \phi_0,\tilde{X}\rangle \frac{\partial}{\partial t}.
\end{align}
Conversely, for a Lie algebroid $A$ over $M$ and a section $\phi_0$ on $A$, if the triple $(\tilde{A},[\cdot,\cdot\hat{]}_A^{\phi_0},\hat{\rho}_A^{\phi_0})$ (resp. $(\tilde{A},[\cdot,\cdot\bar{]}_A^{\phi_0},\bar{\rho}_A^{\phi_0})$) defined by (\ref{hat kakko}) and (\ref{hat anchor}) (resp. (\ref{bar kakko}) and (\ref{bar anchor})) is a Lie algebroid over $M\times \mathbb{R}$, then $(A,\phi_0)$ is a Jacobi algebroid over $M$, i.e., $d_{A}\phi_0=0$. A vector bundle $\tilde{A}$ equipped with the Lie algebroid structure $([\cdot,\cdot\hat{]}_A^{\phi_0},\hat{\rho}_A^{\phi_0})$ (resp. $([\cdot,\cdot\bar{]}_A^{\phi_0},\bar{\rho}_A^{\phi_0})$) is denoted by $\tilde{A}_{{\phi}_{0}}^{\wedge}$ (resp. $\tilde{A}_{{\phi}_{0}}^{-}$). Let $\hat{d}_{A}^{\phi_{0}}$ (resp. $\bar{d}_{A}^{\phi_{0}}$) and $\widehat{\mathcal{L}^{A}}^{\phi_{0}}$ (resp. $\overline{\mathcal{L}^{A}}^{\phi_{0}}$) be the differential of $\tilde{A}_{{\phi}_{0}}^{\wedge}$ (resp. $\tilde{A}_{{\phi}_{0}}^{-}$) and the Lie derivative on $\tilde{A}_{{\phi}_{0}}^{\wedge}$ (resp. $\tilde{A}_{{\phi}_{0}}^{-}$), respectively. Then for any $\tilde{f}$ in $C^{\infty}(M\times \mathbb{R})$ and $\tilde{\phi}$ in $\Gamma (\tilde{A})$, the following formulas hold \cite{IM}:
\begin{align}
&\hat{d}_{A}^{\phi_{0}}\tilde{f}=e^{-t}\left(d_{A}\tilde{f}+\frac{\partial \tilde{f}}{\partial t}\phi_{0}\right),\quad \hat{d}_{A}^{\phi_{0}}\tilde{\phi}=e^{-t}\left(d_{A,\phi_{0}}\tilde{\phi}+\phi_{0}\wedge\frac{\partial \tilde{\phi}}{\partial t}\right);\\
&\bar{d}_{A}^{\phi_{0}}\tilde{f}=d_{A}\tilde{f}+\frac{\partial \tilde{f}}{\partial t}\phi_{0},\quad \bar{d}_{A}^{\phi_{0}}\tilde{\phi}=d_{A}\tilde{\phi}+\phi_{0}\wedge\frac{\partial \tilde{\phi}}{\partial t}.
\end{align} 

Let $(A,\phi_0)$ be a Jacobi algebroid over $M$, $\pi$ a 2-section on $A$ and set $\tilde{\pi}:=e^{-t}\pi$ in $\Gamma (\Lambda ^2\tilde{A})$. Then the following holds:
\begin{align}
[\tilde{\pi},\tilde{\pi}\bar{]}_A^{\phi_0}=e^{-2t}[\pi,\pi]_{A,\phi_0}.
\end{align}
Therefore a 2-section $\pi$ on $A$ is a Jacobi structure on a Jacobi algebroid $(A,\phi_0)$ over $M$ if and only if $\tilde{\pi}$ in $\Gamma (\Lambda ^2\tilde{A})$ is a Poisson structure on a Lie algebroid $\tilde{A}_{{\phi}_{0}}^{-}$ over $M\times \mathbb{R}$. The Poisson structure $\tilde{\pi}$ on $\tilde{A}_{{\phi}_{0}}^{-}$ is called {\it the Poissonization} of $\pi$. 

In the case of $(A,\phi_0)=(TM\oplus \mathbb{R}, (0,1))$, the Lie algebroid $\tilde{A}_{{\phi}_{0}}^{-}
$ is isomorphic to the standard Lie algebroid $T(M\times \mathbb{R})$ over $M\times \mathbb{R}$. Then the Poissonization $\widetilde{(\Lambda ,E)}$ of a Jacobi structure $(\Lambda ,E)$ on $(TM\oplus \mathbb{R},(0,1))$ corresponds to a Poisson structure $\Pi:=e^{-t}\left(\Lambda+\frac{\partial}{\partial t}\wedge E\right)$ on $T(M\times \mathbb{R})$. This is just the Poissonizaion of a Jacobi structure on $M$.

\section{Compatibility between Jacobi structures and pseudo-Riemannian cometrics on Jacobi algebroids}\label{The compatibility between Jacobi structures and Riemann metrics on Jacobi algebroids}

\subsection{Compatibility between 2-sections and pseudo-Riemannian cometrics on Lie algebroids}\label{The compatibility between 2-sections and pseudo-Riemann cometrics on Lie algebroids}

An {\it affine connection} on a skew algebroid $(A, [\cdot,\cdot]_{A}, \rho_{A})$ over $M$ is an $\mathbb{R}$-bilinear map 
$\nabla: \Gamma(A)\times \Gamma(A)\rightarrow \Gamma(A)$ satisfying 
for any $f \in C^{\infty}(M)$ and $X ,Y \in \Gamma(A)$, 
\begin{align*}
\nabla_{fX}Y&=f\nabla_X Y, \\
\nabla_X fY&=f\nabla_X Y+(\rho_A(X)f)Y.
\end{align*}

For any pseudo-Riemannian metric $g$ on $A$, 
there exists a unique affine connection $\nabla$ on $(A, [\cdot,\cdot]_{A}, \rho_{A})$ which is torsion-free and compatible with $g$, i.e., it satisfies
\begin{align*}
\nabla_X Y -\nabla_Y X&=[X,Y]_A, \\
\rho_A(X)(g(Y,Z))&=g(\nabla_X Y,Z)+g(Y,\nabla_X Z)
\end{align*}
for any $X,Y$ and $Z \in \Gamma(A)$.
The unique affine connection $\nabla$ on $(A, [\cdot,\cdot]_{A}, \rho_{A})$ is called the {\it Levi-Civita connection} of $g$.  
The Levi-Civita connection $\nabla$ of $g$ on $(A, [\cdot,\cdot]_{A}, \rho_{A})$ is characterized by the Koszul formula$:$ 
\begin{align*}
2g(\nabla_X Y,Z)=&\rho_A(X)(g(Y,Z))+\rho_A(Y)(g(X,Z)) -\rho_A(Z)(g(X,Y)) \\
&-g([Y,Z]_A,X)-g([X,Z]_A,Y)+g([X,Y]_A,Z).
\end{align*}

\begin{defn}\label{skcom}
Let $(A, [\cdot,\cdot]_{A}, \rho_{A})$ be a skew algebroid over $M$, $\pi$ a 2-section on $A$ and $g^*$ a pseudo-Riemannian metric on $A^*$.  
The pair $(\pi,g^*)$ is said to be compatible on $A$ if 
\begin{equation*}
D^\pi \pi =0,
\end{equation*}
i.e.,
\begin{equation*}
(\pi ^{\sharp}\alpha)(\pi(\beta,\gamma))=\pi(D^{\pi}_{\alpha} \beta,\gamma)+\pi(\beta,D^{\pi}_{\alpha}\gamma)
\end{equation*}
for any $\alpha,\beta$ and $\gamma \in \Gamma(A^*)$,
where $D^\pi$ is the Levi-Civita connection of $g^*$ on the skew algebroid $A^*_{\pi}$.
\end{defn}

\begin{prop}\label{always Poisson}
$(A, [\cdot,\cdot]_{A}, \rho_{A})$, $\pi$ and $g^*$ are same in Definition \ref{skcom}.  
If the pair $(\pi,g^*)$ is compatible, then $[\pi,\pi]_A=0$.
\end{prop}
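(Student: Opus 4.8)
The plan is to recognize that the compatibility condition is exactly the statement that $\pi$, regarded as a $2$-cosection on the skew algebroid $A^*_\pi$, is parallel for the Levi-Civita connection $D^\pi$ of $g^*$, and then to run the classical ``parallel $\Rightarrow$ closed'' argument inside $A^*_\pi$. First I would record that, since $\rho_\pi=\rho_A\circ\pi^\sharp$, we have $(\pi^\sharp\alpha)(h)=\rho_\pi(\alpha)(h)$ for every $h\in C^\infty(M)$, so that the covariant derivative of the $2$-cosection $\pi$ on $A^*_\pi$ along $D^\pi$ is
\[
(D^\pi_\alpha\pi)(\beta,\gamma)=(\pi^\sharp\alpha)\bigl(\pi(\beta,\gamma)\bigr)-\pi(D^\pi_\alpha\beta,\gamma)-\pi(\beta,D^\pi_\alpha\gamma)
\]
for $\alpha,\beta,\gamma\in\Gamma(A^*)$. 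Comparing with the displayed identity in Definition \ref{skcom}, the pair $(\pi,g^*)$ is compatible precisely when $D^\pi\pi=0$.

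Next I would invoke the standard identity relating the differential of an algebroid to a torsion-free connection: for any skew algebroid $B$ with torsion-free affine connection $\nabla$ and any $2$-cosection $\omega$ on $B$,
\[
(d_B\omega)(X,Y,Z)=(\nabla_X\omega)(Y,Z)-(\nabla_Y\omega)(X,Z)+(\nabla_Z\omega)(X,Y),
\]
which follows by expanding the right-hand side with the Leibniz rule for $\nabla$ and substituting $\nabla_X Y-\nabla_Y X=[X,Y]_B$. Crucially this computation uses only the Leibniz rule and torsion-freeness — not the Jacobi identity — so it remains valid on the skew algebroid $A^*_\pi$, which need not be a Lie algebroid. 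Applying it with $B=A^*_\pi$, $\nabla=D^\pi$ and $\omega=\pi$, and using $D^\pi\pi=0$ from the previous step, yields $d_\pi\pi=0$, where $d_\pi$ denotes the differential of $A^*_\pi$.

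Finally I would conclude with the identity $d_\pi D=[\pi,D]_A$ for $D\in\Gamma(\Lambda^*A)$ recalled in Section \ref{Lie algebroids and Poisson structures}: taking $D=\pi$ gives $d_\pi\pi=[\pi,\pi]_A$, so $[\pi,\pi]_A=0$, which is the assertion. I do not expect a serious obstacle; the only point requiring care is the second step, namely verifying that the Cartan-type formula for $d_B$ in terms of a torsion-free connection survives the passage from Lie algebroids to skew algebroids. A direct index computation, identical to the manifold case, confirms that torsion-freeness alone suffices (metric compatibility of $D^\pi$ is not even used), so the argument goes through.
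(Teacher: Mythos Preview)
Your proposal is correct. The paper does not give a separate proof of Proposition~\ref{always Poisson}, but the proof it supplies for the more general Jacobi-algebroid analogue specializes, upon setting $\phi_0=0$ (so $X_0=0$), to precisely your argument: one uses torsion-freeness of $D^{\pi}$ to write $(d_\pi\pi)(\alpha,\beta,\gamma)=\sum_{\mathrm{Cycl}\,(\alpha,\beta,\gamma)}(D^{\pi}_\alpha\pi)(\beta,\gamma)$, and then invokes $d_\pi\pi=[\pi,\pi]_A$.
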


This proposition implies that a 2-section $\pi$ on a skew algebroid $A$ with a compatible cometric is always a Poisson structure on $A$.

Definition \ref{skcom} is a natural extension of the following definition of the compatibility between a Poisson structure on a manifold and a cometric in [Boucetta].

\begin{defn}[Boucetta]\label{Boucetta}
Let $(M,\pi)$ be a Poisson manifold and $g^*$ a pseudo-Riemannian metric on $T^*M$.  
The pair $(\pi,g^*)$ is said to be compatible on $M$ if 
\begin{equation*}
D^{\pi}\pi =0,
\end{equation*}
where $D^{\pi}$ is the Levi-Civita connection of $g^*$ on the Lie algebroid $(T^* M)_\pi$.
\end{defn}
\begin{rem}
If $(\pi,g^*)$ is compatible on $M$ and $\pi$ is non-degenerate, then the corresponding symplectic form $\omega$ to $\pi$ is a K\"ahler form.
Hence a Poisson structure with a compatible cometric is considered as a generalization of a K\"ahler structure.
\end{rem}

\subsection{Compatibility between 2-sections and pseudo-Riemannian cometrics on Jacobi algebroids}\label{The compatibility between 2-sections and pseudo-Riemann cometrics on Jacobi algebroids}

In this subsection, we shall define compatibility between 2-sections and pseudo-Riemannian cometrics on Jacobi algebroids and investigate their properties. Although A{\"i}t Amrane and Zeglaoui \cite{AZ1}\cite{AZ2} defined compatibility of Jacobi structures and pseudo-Riemannian metrics on manifolds,
their definition is different from the following one.

\begin{defn}
Let $(A,\phi_0)$ be a Jacobi algebroid over $M$, $\pi $ a 2-section on $(A,\phi_0)$ and $g^*$ a pseudo-Riemannian metric on $A^*$. The pair $(\pi,g^*)$ is said to be {\it compatible} on $(A,\phi_0)$ if 
\begin{align*}
(D^{\pi,\phi_0}_{\alpha} \pi)(\beta,\gamma)=-\frac{1}{2}((&X_0\otimes \pi)(\beta,\gamma,\alpha)+(X_0\otimes \pi)(\gamma,\alpha,\beta) \\
&+g^*(\alpha,\beta)\pi((g^*)^{\flat-1}(X_0),\gamma) \\
&-g^*(\alpha,\gamma)\pi((g^*)^{\flat-1}(X_0),\beta)),
\end{align*}
where $D^{\pi,\phi_0}$ is the Levi-Civita connection of $g^*$ on the skew algebroid $A^*_{\pi,\phi_0}$ induced by $\pi$.
\end{defn}



\begin{rem}
If $\phi_0=0$, the above definition is equivalent to the compatibility of $(\pi,g^*)$ on a Lie algebroid $A$ (See Definition \ref{skcom}).
\end{rem}

The following proposition is the analogy of Proposition\ref{always Poisson}; that is, a 2-section $\pi$ on a Jacobi algebroid $(A,\phi_0)$ with a compatible cometric on $(A,\phi_0)$ is always a Jacobi structure on $(A,\phi_0)$.

\begin{prop}
Let $(A,\phi_0)$ be a Jacobi algebroid over $M$, $\pi $ a 2-section on $A$ and $g^*$ a pseudo-Riemannian metric on $A^*$. If a pair $(\pi,g^*)$ is compatible on $(A,\phi_0)$, then 
$[\pi,\pi]_{A,\phi_0}=0$.
\end{prop}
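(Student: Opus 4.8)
The plan is to extract the Jacobi condition from the compatibility hypothesis along the same lines as the proof of Proposition \ref{always Poisson}, carrying along the corrections produced by $\phi_0$. Write $D:=D^{\pi,\phi_0}$ and $X_0:=-\pi^\sharp\phi_0$, and fix $\xi,\eta,\zeta$ in $\Gamma(A^*)$. By (\ref{Jacobi structure bracket property}),
\[
\tfrac12[\pi,\pi]_{A,\phi_0}(\xi,\eta,\zeta)=\big\langle\zeta,\,[\pi^\sharp\xi,\pi^\sharp\eta]_A-\pi^\sharp[\xi,\eta]_{\pi,\phi_0}\big\rangle,
\]
so $[\pi,\pi]_{A,\phi_0}=0$ is equivalent to $[\pi^\sharp\xi,\pi^\sharp\eta]_A=\pi^\sharp[\xi,\eta]_{\pi,\phi_0}$ for all $\xi,\eta$. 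Using the formulas for $\mathcal{L}_X^{A,\phi_0}$ and $d_{A,\phi_0}$ recalled in Section \ref{Preliminaries}, one finds $[\xi,\eta]_{\pi,\phi_0}=[\xi,\eta]_\pi+\langle\xi,X_0\rangle\eta-\langle\eta,X_0\rangle\xi-\pi(\xi,\eta)\phi_0$, so this target is equivalent to the Poisson-type relation $[\pi,\pi]_A=2\,X_0\wedge\pi$ on the Lie algebroid $A$ (which, via (\ref{phi-bracket}), is just the assertion $[\pi,\pi]_{A,\phi_0}=0$ again).

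Next I would rewrite the compatibility hypothesis in terms of connections. Let $\bar D$ be the partial connection on $A$ along $\rho_\pi=\rho_A\circ\pi^\sharp$ adjoint to $D$, that is, $\langle\bar D_\alpha X,\gamma\rangle:=\rho_\pi(\alpha)\langle X,\gamma\rangle-\langle X,D_\alpha\gamma\rangle$ for $X\in\Gamma(A)$. Writing $\pi(\beta,\gamma)=\langle\pi^\sharp\beta,\gamma\rangle$ and expanding $(D_\alpha\pi)(\beta,\gamma)=\rho_\pi(\alpha)\pi(\beta,\gamma)-\pi(D_\alpha\beta,\gamma)-\pi(\beta,D_\alpha\gamma)$, one obtains $(D_\alpha\pi)(\beta,\gamma)=\langle\bar D_\alpha(\pi^\sharp\beta)-\pi^\sharp(D_\alpha\beta),\gamma\rangle$; hence the compatibility condition becomes $\bar D_\alpha(\pi^\sharp\beta)-\pi^\sharp(D_\alpha\beta)=-\tfrac12\Phi(\alpha,\beta)$, where $\Phi(\alpha,\beta)\in\Gamma(A)$ is the section given by the right-hand side of the definition, built from $X_0$, $\pi$ and $g^*$. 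Antisymmetrizing in $\alpha,\beta$ and using that $D$ is torsion-free on $A^*_{\pi,\phi_0}$ (so $D_\alpha\beta-D_\beta\alpha=[\alpha,\beta]_{\pi,\phi_0}$) gives
\[
\pi^\sharp[\xi,\eta]_{\pi,\phi_0}=\bar D_\xi(\pi^\sharp\eta)-\bar D_\eta(\pi^\sharp\xi)+\tfrac12\big(\Phi(\xi,\eta)-\Phi(\eta,\xi)\big),
\]
and so the target identity turns into $[\pi^\sharp\xi,\pi^\sharp\eta]_A=\bar D_\xi(\pi^\sharp\eta)-\bar D_\eta(\pi^\sharp\xi)+\tfrac12\big(\Phi(\xi,\eta)-\Phi(\eta,\xi)\big)$.

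To settle this, I would use metric compatibility of $D$ with $g^*$, which transfers to metric compatibility of $\bar D$ with the metric $g$ on $A$ given by $g(X,Y):=g^*\big((g^*)^{\flat-1}X,(g^*)^{\flat-1}Y\big)$, together with the Koszul formula, comparing $\bar D$ with the Levi-Civita connection $\nabla$ of $g$ on the Lie algebroid $A$ as in the proof of Proposition \ref{always Poisson}. Since $\nabla$ is torsion-free for $[\cdot,\cdot]_A$, one has $[\pi^\sharp\xi,\pi^\sharp\eta]_A=\nabla_{\pi^\sharp\xi}(\pi^\sharp\eta)-\nabla_{\pi^\sharp\eta}(\pi^\sharp\xi)$, and the difference $\bar D_\alpha X-\nabla_{\pi^\sharp\alpha}X$ is $C^\infty(M)$-linear in $(\alpha,X)$, so the whole matter reduces to computing that tensor. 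There the two kinds of correction terms in the definition play complementary roles: the $X_0\otimes\pi$ terms account for the discrepancy between $[\cdot,\cdot]_{\pi,\phi_0}$ and $[\cdot,\cdot]_\pi$ (equivalently, for the $2\,X_0\wedge\pi$ on the right of $[\pi,\pi]_A=2\,X_0\wedge\pi$), while the $g^*(\alpha,\cdot)\,\pi((g^*)^{\flat-1}X_0,\cdot)$ terms are exactly what metric compatibility demands for the step removing the would-be symmetric part. Carrying the bookkeeping to its end yields $[\pi^\sharp\xi,\pi^\sharp\eta]_A=\pi^\sharp[\xi,\eta]_{\pi,\phi_0}$, hence $[\pi,\pi]_{A,\phi_0}=0$; for $\phi_0=0$ all corrections drop out and this reproduces the proof of Proposition \ref{always Poisson}.

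I expect the main obstacle to be exactly this verification: confirming that the coefficients in the definition — the factor $\tfrac12$ and the two cyclic groupings of the three arguments — are calibrated so that the $\phi_0$-corrections fit simultaneously into both roles in the Proposition \ref{always Poisson} argument. A cleaner, though not shorter, alternative is to go through the Poissonization: extend $g^*$ to a cometric $\tilde g^*$ on $(\tilde{A})^{*}$ over $M\times\mathbb{R}$ — with the extra $\mathbb{R}$-direction orthogonal to $A^*$ and carrying an $e^{t}$-weighted norm dictated by the bracket (\ref{bar kakko}) and anchor (\ref{bar anchor}) — verify that $(\pi,g^*)$ is compatible on $(A,\phi_0)$ if and only if $(\tilde{\pi},\tilde g^*)$ is compatible on the Lie algebroid $\tilde{A}_{\phi_0}^{-}$ in the sense of Definition \ref{skcom}, and then apply Proposition \ref{always Poisson} to get $[\tilde{\pi},\tilde{\pi}\bar{]}_A^{\phi_0}=0$; since $[\tilde{\pi},\tilde{\pi}\bar{]}_A^{\phi_0}=e^{-2t}[\pi,\pi]_{A,\phi_0}$, this again yields $[\pi,\pi]_{A,\phi_0}=0$. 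The heart of that route — identifying the right normalization of $\tilde g^*$ — is the same calibration of $\phi_0$-corrections.
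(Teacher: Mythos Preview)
Your route is different from the paper's and substantially more indirect. The paper never introduces the adjoint connection $\bar D$ or a Levi-Civita connection $\nabla$ on $A$; it uses only the torsion-freeness of $D^{\pi,\phi_0}$ on $A^*_{\pi,\phi_0}$. Starting from $[\pi,\pi]_{A,\phi_0}=d_\pi\pi-2X_0\wedge\pi$ and the relation $[\alpha,\beta]_{\pi,\phi_0}=[\alpha,\beta]_\pi+\langle X_0,\alpha\rangle\beta-\langle X_0,\beta\rangle\alpha-\pi(\alpha,\beta)\phi_0$ (which you also record), the paper obtains
\[
[\pi,\pi]_{A,\phi_0}(\alpha,\beta,\gamma)=\sum_{\mathrm{Cycl}(\alpha,\beta,\gamma)}(D^{\pi,\phi_0}_\alpha\pi)(\beta,\gamma)+(X_0\wedge\pi)(\alpha,\beta,\gamma),
\]
and then observes that the cyclic sum of the right-hand side of the compatibility definition equals $-(X_0\wedge\pi)(\alpha,\beta,\gamma)$: the two $X_0\otimes\pi$ terms cyclically sum to $-(X_0\wedge\pi)$, while the two $g^*$-terms cancel completely under the cyclic sum. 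So the compatibility hypothesis kills the whole expression, and metric compatibility of $D^{\pi,\phi_0}$ is never invoked.

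By contrast, your argument antisymmetrizes only in two variables, so the $g^*$-terms in $\Phi(\xi,\eta)-\Phi(\eta,\xi)$ do not cancel, and you are then forced to account for them via the difference tensor $\bar D_\alpha-\nabla_{\pi^\sharp\alpha}$. That is precisely the ``bookkeeping'' you flag as the main obstacle, and you do not carry it out. Note also that after your antisymmetrization step you have already exhausted the compatibility hypothesis, so the remaining identity $[\pi^\sharp\xi,\pi^\sharp\eta]_A=\bar D_\xi(\pi^\sharp\eta)-\bar D_\eta(\pi^\sharp\xi)+\tfrac12(\Phi(\xi,\eta)-\Phi(\eta,\xi))$ must be a \emph{universal} identity about the Levi-Civita connection of $g^*$ on $A^*_{\pi,\phi_0}$; proving this directly is possible but is exactly the computation you have deferred. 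In short: your sketch is plausible but incomplete at its decisive step, whereas the paper's cyclic-sum argument is a two-line verification once the displayed formula above is in hand. Your Poissonization alternative is essentially Theorem~\ref{main thm} combined with Proposition~\ref{always Poisson}; it works, but in the paper's logical order Theorem~\ref{main thm} comes after this proposition.
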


\begin{proof}
By the definition (\ref{phi-bracket}) of $\phi_0$-Schouten bracket $[\cdot,\cdot]_{A,\phi_0}$ on $\Gamma (\Lambda^*A)$, we have
\begin{align}
    [\pi,\pi]_{A,\phi_0}&=[\pi,\pi]_{A}+\pi\wedge \iota_{\phi_0}\pi+\iota_{\phi_0}\pi\wedge \pi\nonumber\\
                        &=d_\pi\pi+2\pi^\sharp\phi_0\wedge\pi\nonumber\\
                        &=d_\pi\pi-2X_0\wedge \pi.\label{Jacobi tashikame tochu-shiki}
\end{align}
By the fact that for any $\alpha$ and $\beta$ in $\Gamma(A^*)$,
\begin{align*}
    [\alpha, \beta]_{\pi,\phi_0}=[\alpha,\beta]_\pi+\langle X_0,\alpha\rangle\beta-\langle X_0,\beta\rangle\alpha-\pi(\alpha,\beta)\phi_0
\end{align*}
and the property that $D^{\pi,\phi_0}$ is torsion-free, we obtain for any $\alpha, \beta$ and $\gamma$ in $\Gamma(A^*)$,
\begin{align*}
    (d_\pi\pi)(\alpha, \beta, \gamma)=\sum_{\mathrm{Cycl}\,(\alpha,\beta,\gamma)}^{}(D_\alpha^{\pi,\phi_0}\pi)(\beta, \gamma)+3(X_0\wedge\pi)(\alpha, \beta, \gamma),
\end{align*}
where $\sum_{\mathrm{Cycl}\,(\alpha,\beta,\gamma)}^{}$ means the sum of the cyclic permutations of $\alpha, \beta$ and $\gamma$. Therefore by (\ref{Jacobi tashikame tochu-shiki}), we compute for any $\alpha, \beta$ and $\gamma$ in $\Gamma(A^*)$,
\begin{align*}
    [\pi,\pi]_{A,\phi_0}(\alpha, \beta, \gamma)&=(d_\pi\pi-2X_0\wedge \pi)(\alpha, \beta, \gamma)\\          &=\sum_{\mathrm{Cycl}\,(\alpha,\beta,\gamma)}^{}(D_\alpha^{\pi,\phi_0}\pi)(\beta, \gamma)+(X_0\wedge\pi)(\alpha, \beta, \gamma)\\
    &=\sum_{\mathrm{Cycl}\,(\alpha,\beta,\gamma)}^{}\left((D_\alpha^{\pi,\phi_0}\pi)(\beta, \gamma)+\frac{1}{2}((X_0\otimes \pi)(\beta,\gamma,\alpha)\right.\\
    &\quad \phantom{\frac{1}{2}}+(X_0\otimes \pi)(\gamma,\alpha,\beta)+g^*(\alpha,\beta)\pi((g^*)^{\flat-1}(X_0),\gamma) \\
    &\qquad \qquad \qquad \qquad \quad \ \ \  \left.\phantom{\frac{1}{2}}-g^*(\alpha,\gamma)\pi((g^*)^{\flat-1}(X_0),\beta))\right).
\end{align*}
Since $(\pi,g^*)$ is compatible on $(A,\phi_0)$, the consequence holds.
\end{proof}

The compatibility with a cometric is ``preserved'' by the Poissonization.  To be precise, the following theorem holds. 
\begin{theorem}\label{main thm}
Let $(A,\phi_0)$ be a Jacobi algebroid over $M$, $\pi \in \Gamma (\Lambda ^2A)$ a Jacobi structure on $(A,\phi_0)$ and $g^*$ a pseudo-Riemannian metric on $A^*$. For the Poissonization $\tilde{\pi}:=e^{-t}\pi \in \Gamma (\Lambda ^2\tilde{A})$ of $\pi$ and a pseudo-Riemannian metric $\tilde{g}^*:=e^{-t}g^*$ on $\tilde{A}^*$, a pair $(\pi,g^*)$ is compatible on $(A,\phi_0)$ if and only if $(\tilde{\pi},\tilde{g^*})$ is compatible on $\tilde{A}_{{\phi}_{0}}^{-}$.
\end{theorem}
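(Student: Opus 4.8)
The plan is to rewrite the Poisson-side condition $D^{\tilde\pi}\tilde\pi=0$ entirely in terms of data living over $M$ and to recognize the outcome as $e^{-2t}$ times the defect of the Jacobi-side compatibility condition. Throughout I identify sections of $\tilde{A}$ (resp.\ $\tilde{A}^{*}$) with time-dependent sections of $A$ (resp.\ $A^{*}$), I note that $g^{*}$, the duality pairing and $X_{0}=-\pi^{\sharp}\phi_{0}$ are $t$-independent, and I use $\tilde{\pi}^{\sharp}=e^{-t}\pi^{\sharp}$ together with $\langle\phi_{0},\pi^{\sharp}\xi\rangle=\langle X_{0},\xi\rangle$. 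The first step is to work out the Lie algebroid structure of $(\tilde{A}_{\phi_{0}}^{-})^{*}_{\tilde{\pi}}$: from the bar-anchor (\ref{bar anchor}) one gets $\rho_{\tilde\pi}(\tilde\alpha)=\bar{\rho}_{A}^{\phi_{0}}(e^{-t}\pi^{\sharp}\tilde\alpha)=e^{-t}\bigl(\rho_{\pi}(\tilde\alpha)+\langle X_{0},\tilde\alpha\rangle\tfrac{\partial}{\partial t}\bigr)$, and expanding $[\tilde\alpha,\tilde\beta]_{\tilde\pi}$ from its definition (Lie derivatives along $\tilde{\pi}^{\sharp}\tilde\alpha,\tilde{\pi}^{\sharp}\tilde\beta$ on $\tilde{A}_{\phi_{0}}^{-}$ minus a $\bar{d}_{A}^{\phi_{0}}$-exact term) and comparing with $[\cdot,\cdot]_{\pi,\phi_{0}}$, one finds
\[
[\tilde\alpha,\tilde\beta]_{\tilde\pi}=e^{-t}\Bigl([\tilde\alpha,\tilde\beta]_{\pi,\phi_{0}}+\langle X_{0},\tilde\alpha\rangle\Bigl(\frac{\partial\tilde\beta}{\partial t}-\tilde\beta\Bigr)-\langle X_{0},\tilde\beta\rangle\Bigl(\frac{\partial\tilde\alpha}{\partial t}-\tilde\alpha\Bigr)\Bigr),
\]
i.e.\ $(\tilde{A}_{\phi_{0}}^{-})^{*}_{\tilde{\pi}}$ is precisely the Lie algebroid of the form (\ref{hat kakko})--(\ref{hat anchor}) attached to the Jacobi algebroid $(A^{*}_{\pi,\phi_{0}},X_{0})$ induced by $\pi$.

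Second, I would substitute this anchor and bracket into the Koszul formula for the Levi-Civita connection $D^{\tilde\pi}$ of $\tilde{g}^{*}=e^{-t}g^{*}$ on $(\tilde{A}_{\phi_{0}}^{-})^{*}_{\tilde{\pi}}$ and compare it, term by term, with the Koszul formula for $D^{\pi,\phi_{0}}$ on $A^{*}_{\pi,\phi_{0}}$ (regarded as differentiating time-dependent sections only along $TM$). Since $g^{*}$ is $t$-independent every $e^{-t}$ factors out, the $\tfrac{\partial}{\partial t}$-derivatives it produces can be collected, and after cancellation one obtains
\[
D^{\tilde\pi}_{\tilde\alpha}\tilde\beta=e^{-t}\Bigl(D^{\pi,\phi_{0}}_{\tilde\alpha}\tilde\beta+\langle X_{0},\tilde\alpha\rangle\frac{\partial\tilde\beta}{\partial t}-\frac{1}{2}\langle X_{0},\tilde\alpha\rangle\tilde\beta+\frac{1}{2}\langle X_{0},\tilde\beta\rangle\tilde\alpha-\frac{1}{2}g^{*}(\tilde\alpha,\tilde\beta)(g^{*})^{\flat-1}(X_{0})\Bigr).
\]

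Third, I would plug this into $(D^{\tilde\pi}_{\tilde\alpha}\tilde\pi)(\tilde\beta,\tilde\gamma)=\rho_{\tilde\pi}(\tilde\alpha)\bigl(\tilde\pi(\tilde\beta,\tilde\gamma)\bigr)-\tilde\pi(D^{\tilde\pi}_{\tilde\alpha}\tilde\beta,\tilde\gamma)-\tilde\pi(\tilde\beta,D^{\tilde\pi}_{\tilde\alpha}\tilde\gamma)$, using $\tilde\pi=e^{-t}\pi$ and $\rho_{\tilde\pi}$ from the first step. In the resulting sum the $\tfrac{\partial}{\partial t}$-terms coming from $\rho_{\tilde\pi}$ cancel those coming from $D^{\tilde\pi}$, and the three $\langle X_{0},\tilde\alpha\rangle\pi(\tilde\beta,\tilde\gamma)$-terms cancel among themselves, leaving
\[
(D^{\tilde\pi}_{\tilde\alpha}\tilde\pi)(\tilde\beta,\tilde\gamma)=e^{-2t}\left[(D^{\pi,\phi_{0}}_{\tilde\alpha}\pi)(\tilde\beta,\tilde\gamma)+\frac{1}{2}\Bigl((X_{0}\otimes\pi)(\tilde\beta,\tilde\gamma,\tilde\alpha)+(X_{0}\otimes\pi)(\tilde\gamma,\tilde\alpha,\tilde\beta)+g^{*}(\tilde\alpha,\tilde\beta)\pi((g^{*})^{\flat-1}(X_{0}),\tilde\gamma)-g^{*}(\tilde\alpha,\tilde\gamma)\pi((g^{*})^{\flat-1}(X_{0}),\tilde\beta)\Bigr)\right].
\]
The bracketed quantity is exactly ``left-hand side minus right-hand side'' of the defining compatibility identity of $(\pi,g^{*})$ on $(A,\phi_{0})$, evaluated at each fixed $t$. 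Since it is tensorial in $\tilde\alpha,\tilde\beta,\tilde\gamma$ and $e^{-2t}$ never vanishes, $D^{\tilde\pi}\tilde\pi=0$ holds for all time-dependent sections if and only if the compatibility identity holds for all sections of $A^{*}$, which is the asserted equivalence.

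The main obstacle is the first step. Steps two and three are long but purely mechanical once the structure of $(\tilde{A}_{\phi_{0}}^{-})^{*}_{\tilde{\pi}}$ is in hand, whereas identifying that structure demands care with the $e^{-t}$ prefactor and, especially, with the ``$\tfrac{\partial\tilde\beta}{\partial t}-\tilde\beta$'' (rather than ``$\tfrac{\partial\tilde\beta}{\partial t}$'') occurring in the bracket; a normalization slip there would propagate through every subsequent line. A minor secondary point is to make the equivalence clean by observing that both compatibility conditions are pointwise tensorial identities, so it is enough to test them on $t$-independent sections.
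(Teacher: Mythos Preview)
Your proposal is correct and follows essentially the same approach as the paper: both derive the explicit formula for the Levi-Civita connection $\tilde{D}$ of $\tilde{g}^{*}$ on $(\tilde{A}_{\phi_{0}}^{-})^{*}_{\tilde{\pi}}$ (the paper states it as ``easy to confirm'' while you outline the derivation via the Koszul formula), then expand $(\tilde{D}_{\tilde\alpha}\tilde\pi)(\tilde\beta,\tilde\gamma)$ to obtain $e^{-2t}$ times the Jacobi-side compatibility defect. Your additional remark that $(\tilde{A}_{\phi_{0}}^{-})^{*}_{\tilde{\pi}}$ coincides with the hat-algebroid $(\widetilde{A^{*}_{\pi,\phi_{0}}})^{\wedge}_{X_{0}}$ of the induced Jacobi algebroid $(A^{*}_{\pi,\phi_{0}},X_{0})$ is a nice structural observation not made explicit in the paper, but it does not change the line of argument.
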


\begin{proof}
It is easy to confirm that the Levi-Civita connection $\tilde{D}$ of $\tilde{g}^*$ on $(\tilde{A}_{{\phi}_{0}}^{-})^*_{\tilde{\pi}}=(\tilde{A}^*,[\cdot,\cdot\bar{]}_{\tilde{\pi}}^{\phi_0},\bar{\rho}_{\tilde{\pi}}^{\phi_0})$, where
\begin{align}
    [\tilde{\alpha},\tilde{\beta}\bar{]}_{\tilde{\pi}}^{\phi_0}&:=\overline{\mathcal{L}^{A}}_{\!\!\tilde{\pi}\tilde{\alpha}}^{\phi_{0}}\tilde{\beta}-\overline{\mathcal{L}^{A}}_{\!\!\tilde{\pi}\tilde{\beta}}^{\phi_{0}}\tilde{\alpha}-\bar{d}_{A}^{\phi_{0}}\langle\tilde{\pi}^\sharp\tilde{\alpha},\tilde{\beta}\rangle\quad (\forall\tilde{\alpha}, \tilde{\beta}\in\Gamma(\tilde{A}^*)),\\
    \bar{\rho}_{\tilde{\pi}}^{\phi_0}&:=\bar{\rho}_{A}^{\phi_0}\circ\tilde{\pi}^\sharp,\label{gotegote anchor}
\end{align}
can be written explicitly as follows: for any $\tilde{\alpha}$ and $\tilde{\beta}$ in $\Gamma(\tilde{A}^*)$,
\begin{align}
    \tilde{D}_{\tilde{\alpha}}\tilde{\beta}=e^{-t}&\left(D_{\tilde{\alpha}}^{\pi,\phi_0}\tilde{\beta}+\langle X_0,\tilde{\alpha}\rangle\left(\frac{\partial\tilde{\beta}}{\partial t}-\frac{1}{2}\tilde{\beta}\right)+\frac{1}{2}\langle X_0,\tilde{\beta}\rangle\tilde{\alpha}\right.\nonumber\\
&\qquad \qquad \qquad \qquad \qquad \left.\phantom{\frac{\partial\tilde{\beta}}{\partial t}}-\frac{1}{2}g^*(\tilde{\alpha},\tilde{\beta})(g^*)^{\flat-1}(X_0)\right).\label{explicitly Levi-Civita}
\end{align}
For any $\tilde{\alpha}, \tilde{\beta}$ and $\tilde{\gamma}$ in $\Gamma(\tilde{A}^*)$,
\begin{align*}
    (\tilde{D}\tilde{\pi})(\tilde{\alpha},\tilde{\beta},\tilde{\gamma})&=(\tilde{D}_{\tilde{\alpha}}\tilde{\pi})(\tilde{\beta},\tilde{\gamma})\\
    &=\bar{\rho}_{\tilde{\pi}}^{\phi_0}(\tilde{\alpha})(\tilde{\pi}(\tilde{\beta},\tilde{\gamma}))-\tilde{\pi}(\tilde{D}_{\tilde{\alpha}}\tilde{\beta},\tilde{\gamma})-\tilde{\pi}(\tilde{\beta},\tilde{D}_{\tilde{\alpha}}\tilde{\gamma}).
\end{align*}
Here by using (\ref{gotegote anchor}) and (\ref{explicitly Levi-Civita}), we have
\begin{align*}
    \bar{\rho}_{\tilde{\pi}}^{\phi_0}(\tilde{\alpha})(\tilde{\pi}(\tilde{\beta},\tilde{\gamma}))
    &=e^{-2t}\left(\rho_\pi(\tilde{\alpha})(\pi(\tilde{\beta},\tilde{\gamma}))-\langle X_0, \tilde{\alpha}\rangle\pi(\tilde{\beta},\tilde{\gamma})\phantom{\frac{\partial\tilde{\beta}}{\partial t}}\right.\\
    &\qquad \qquad \qquad \left.+\langle X_0, \tilde{\alpha}\rangle\pi\left(\frac{\partial\tilde{\beta}}{\partial t},\tilde{\gamma}\right)+\langle X_0, \tilde{\alpha}\rangle\pi\left(\tilde{\beta},\frac{\partial\tilde{\gamma}}{\partial t}\right)\right),\\
    \tilde{\pi}(\tilde{D}_{\tilde{\alpha}}\tilde{\beta},\tilde{\gamma})
    &=e^{-2t}\left(\pi(D_{\tilde{\alpha}}^{\pi,\phi_0}\tilde{\beta},\tilde{\gamma})+\langle X_0,\tilde{\alpha}\rangle\pi\left(\frac{\partial\tilde{\beta}}{\partial t},\tilde{\gamma}\right)\right.\\
    &\qquad \qquad \qquad -\frac{1}{2}\langle X_0,\tilde{\alpha}\rangle\pi(\tilde{\beta},\tilde{\gamma})+\frac{1}{2}\langle X_0,\tilde{\beta}\rangle\pi(\tilde{\alpha},\tilde{\gamma})\\
    &\qquad \qquad \qquad \qquad \qquad \qquad  \left.-\frac{1}{2}g^*(\tilde{\alpha},\tilde{\beta})\pi((g^*)^{\flat-1}(X_0),\tilde{\gamma})\right),\\
    \tilde{\pi}(\tilde{\beta},\tilde{D}_{\tilde{\alpha}}\tilde{\gamma})
    &=e^{-2t}\left(\pi(\tilde{\beta},D_{\tilde{\alpha}}^{\pi,\phi_0}\tilde{\gamma})+\langle X_0,\tilde{\alpha}\rangle\pi\left(\tilde{\beta},\frac{\partial\tilde{\gamma}}{\partial t}\right)\right.\\
    &\qquad \qquad \qquad -\frac{1}{2}\langle X_0,\tilde{\alpha}\rangle\pi(\tilde{\beta},\tilde{\gamma})+\frac{1}{2}\langle X_0,\tilde{\gamma}\rangle\pi(\tilde{\beta},\tilde{\alpha})\\
    &\qquad \qquad \qquad \qquad \qquad \qquad \left.-\frac{1}{2}g^*(\tilde{\alpha},\tilde{\gamma})\pi(\tilde{\beta},(g^*)^{\flat-1}(X_0))\right).
\end{align*}
It thus follows that
\begin{align*}
    (\tilde{D}\tilde{\pi})(\tilde{\alpha},\tilde{\beta},\tilde{\gamma})
    &=e^{-2t}\left(\rho_\pi(\tilde{\alpha})(\pi(\tilde{\beta},\tilde{\gamma}))-\pi(D_{\tilde{\alpha}}^{\pi,\phi_0}\tilde{\beta},\tilde{\gamma})-\pi(\tilde{\beta},D_{\tilde{\alpha}}^{\pi,\phi_0}\tilde{\gamma})\right.\\
    &\qquad \qquad -\frac{1}{2}\langle X_0,\tilde{\beta}\rangle\pi(\tilde{\alpha},\tilde{\gamma})+\frac{1}{2}g^*(\tilde{\alpha},\tilde{\beta})\pi((g^*)^{\flat-1}(X_0),\tilde{\gamma})\\
    &\qquad \qquad \left.-\frac{1}{2}\langle X_0,\tilde{\gamma}\rangle\pi(\tilde{\beta},\tilde{\alpha})+\frac{1}{2}g^*(\tilde{\alpha},\tilde{\gamma})\pi(\tilde{\beta},(g^*)^{\flat-1}(X_0))\right)\\
    &=e^{-2t}\left((D_{\tilde{\alpha}}^{\pi,\phi_0}\pi)(\tilde{\beta},\tilde{\gamma})+\frac{1}{2}\left((X_0\otimes\pi)(\tilde{\beta},\tilde{\gamma},\tilde{\alpha})\right.\right.\\
    &\qquad \qquad +(X_0\otimes\pi)(\tilde{\gamma},\tilde{\alpha},\tilde{\beta})+g^*(\tilde{\alpha},\tilde{\beta})\pi((g^*)^{\flat-1}(X_0),\tilde{\gamma})\\
    &\qquad \qquad \qquad \qquad \qquad \qquad \quad  \left.\left.-g^*(\tilde{\alpha},\tilde{\gamma})\pi((g^*)^{\flat-1}(X_0),\tilde{\beta})\right)\right).
\end{align*}
By regarding $\tilde{\alpha}, \tilde{\beta}$ and $\tilde{\gamma}$ in $\Gamma(\tilde{A}^*)$ as curves in $\Gamma(A^*)$, the conclusion follows immediately.
\end{proof}


\subsection{Contact pseudo-metric structures and Sasakian pseudo-metric structures}\label{contact to sasaki no rei}
In this subsection, 
we prove that for a contact pseudo-metric structure on a manifold, satisfying the compatibility condition is equivalent to being a Sasakian pseudo-metric structure.
This means that a Jacobi structure with a compatible cometric is considered as a generalization of a Sasakian pseudo-metric structure.
Before that, we recall the definitions of almost contact manifolds and Sasakian pseudo-metric manifolds in short. See \cite{CalPer} for details.

An {\it almost contact structure} on a $(2n+1)$-dimensional manifold $M$ is a triple $(\phi, \xi, \eta)$ of
a $(1,1)$-tensor field $\phi$ on $M$, a vector field $\xi$ on $M$ and a 1-form $\eta$ on $M$ satisfying
\begin{equation*}
{\phi}^2=-\rm{id}+\eta \otimes \xi, \quad \langle\eta,\xi\rangle=1.
\end{equation*} 
For an almost contact structure $(\phi,\xi,\eta)$ on $M^{2n+1}$, define an almost complex structure $J$ on $M\times \mathbb{R}$ by 
\begin{equation*}
J\left(X+f\frac{d}{dt}\right)=\phi X+f\xi -\langle\eta,X\rangle\frac{d}{dt}
\end{equation*}
for any $X \in \mathfrak{X}(M\times \mathbb{R})$ tangent to $M$ and $f \in C^{\infty}(M\times \mathbb{R})$, where $t$ is the standard coordinate on $\mathbb{R}$.
An almost contact structure $(\phi,\xi,\eta)$ on $M$ is called {\it normal} if this almost complex structure $J$ on $M\times \mathbb{R}$ is integrable.

A quadruple $(\phi, \xi, \eta,g)$ of an almost contact structure $(\phi,\xi,\eta)$ on $M^{2n+1}$ and a pseudo-Riemannian metric $g$ with signature $(p,q)$ on $M$ is called 
an {\it almost contact pseudo-metric structure} on $M$ if
\begin{equation*}
g(\phi X_1,\phi X_2)=g(X_1,X_2)-\varepsilon\eta (X_1)\eta (X_2)
\end{equation*}
for any $X_1,X_2 \in \mathfrak{X}(M)$, where $\varepsilon:=(-1)^q$.
Furthermore, if $\eta$ is a contact form and it satisfies for any $X_1,X_2 \in \mathfrak{X}(M)$
\begin{equation*}
g(\phi X_1,X_2)=(d\eta )(X_1,X_2),
\end{equation*}
then $(\phi, \xi, \eta,g)$ is called a {\it contact pseudo-metric structure} on $M$. In particular, a contact pseudo-metric structure $(\phi, \xi, \eta,g)$ is called a {\it contact metric structure} if $g$ is a Riemannian metric.

A normal contact pseudo-metric structure is called a {\it Sasakian pseudo-metric structure}. In particular, we call a normal contact metric structure a {\it Sasakian structure} simply. 
It is known that an almost contact pseudo-metric structure $(\phi,\xi,\eta,g)$ on $M$ is a Sasakian pseudo-metric structure if and only if 
\begin{equation*}
(\nabla_{X_1}\phi)X_2=-\frac{1}{2}g(X_1,X_2)\xi+\frac{1}{2}\varepsilon\langle\eta,X_2\rangle X_1
\end{equation*}
for any $X_1,X_2 \in \mathfrak{X}(M)$.
Moreover, for a Sasakian pseudo-metric structure $(\phi,\xi,\eta,g)$ on $M$, 
\begin{equation*}
\nabla_{X}\xi=\frac{1}{2}\varepsilon\phi X,\quad \mathcal{L}_\xi g=0
\end{equation*}
hold for any $X \in \mathfrak{X}(M)$.

\begin{rem}
In contact geometry, a wedge product $(\alpha\wedge\beta)(X_1,\dots,X_{k+l})$ for any $\alpha$ in $\Omega^k(M), \beta$ in $\Omega^l(M)$ and $X_i$ in $\mathcal{X}(M)$ for $i=1,\dots,k+l$ is often defined as $\sum_{\sigma\in S_{k+l}}\mathrm{sgn}\ \!\sigma \alpha(X_{\sigma(1)},\dots,X_{\sigma(k)})\beta(X_{\sigma(k+1)},\dots,X_{\sigma(k+l)})$ multiplied by $\frac{1}{(k+l)!}$. However, in this paper, we adopt that multiplied by $\frac{1}{k!l!}$, which is often used in the context of Lie algebroid theory. These differences cause the various formulas to change slightly. See \cite{Bla} for the differences.
\end{rem}

\begin{theorem}\label{Sasakian pseudo-metric}
Let $(M,\phi,\xi,\eta,g)$ be a contact pseudo-metric manifold and $(\Lambda,E)$ the corresponding Jacobi structure to the contact form $\varepsilon\eta$ on $M$.  
Let $G$ be a pseudo-Riemannian metric on $TM\oplus \mathbb{R}$ defined by 
\begin{equation*}
G((X_1,f),(X_2,h)):=g(X_1,X_2)+\varepsilon fh
\end{equation*}
and $G^*$ the dual metric of $G$ on $T^*M\oplus \mathbb{R}$ with respect to $(\Lambda,E)$. 
Then the pair $((\Lambda,E),G^*)$ is compatible on $(TM\oplus \mathbb{R}, (0,1))$ if and only if $(\phi,\xi,\eta,g)$ is a Sasakian pseudo-metric structure on $M$.
\end{theorem}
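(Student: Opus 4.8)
The plan is to pass to the Poissonization via Theorem \ref{main thm}, recognise the resulting Poisson manifold as the symplectization of the contact manifold $(M,\varepsilon\eta)$, and then identify Boucetta's compatibility condition there with the K\"ahlerness of the associated metric cone, which is the classical characterization of the Sasakian condition. First, the reduction: since $\varepsilon=(-1)^{q}\in\{\pm1\}$ and $\eta$ is a contact form, $\varepsilon\eta$ is again a contact form on $M$, so $(\Lambda,E)$ is a \emph{non-degenerate} Jacobi structure on $(TM\oplus\mathbb{R},(0,1))$. Consequently its Poissonization $\Pi=e^{-t}\big(\Lambda+\tfrac{\partial}{\partial t}\wedge E\big)$ is a non-degenerate Poisson structure on $\widetilde{A}{}^{-}_{(0,1)}$ (with $A=TM\oplus\mathbb{R}$), which is isomorphic to $T(M\times\mathbb{R})$ by the discussion in Subsection \ref{Jacobi algebroids and Jacobi structures}; its corresponding symplectic form $\omega$ on $M\times\mathbb{R}$ is the symplectization form of $(M,\varepsilon\eta)$, and in particular $\omega$ is exact (being $d(e^{t}\varepsilon\eta)$ up to normalization). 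By Theorem \ref{main thm}, the pair $((\Lambda,E),G^{*})$ is compatible on $(TM\oplus\mathbb{R},(0,1))$ if and only if $(\Pi,\widetilde{G}^{*})$, where $\widetilde{G}^{*}=e^{-t}G^{*}$, is compatible on $T(M\times\mathbb{R})$, i.e.\ in the sense of Definition \ref{Boucetta} for the Poisson manifold $(M\times\mathbb{R},\Pi)$.

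Next, the K\"ahler reformulation. Since $\Pi$ is non-degenerate, $\Pi^{\sharp}$ is a Lie algebroid isomorphism from $(T^{*}(M\times\mathbb{R}))_{\Pi}$ onto $T(M\times\mathbb{R})$; transporting the Levi-Civita connection $D^{\Pi}$ of $\widetilde{G}^{*}$ along $\Pi^{\sharp}$ turns it into the Levi-Civita connection $\bar{\nabla}$ of the pushed-forward metric $\bar{g}:=(\Pi^{\sharp})_{*}\widetilde{G}^{*}$, while $\Pi$ is carried to the $2$-form $\omega$. Hence $D^{\Pi}\Pi=0$ is equivalent to $\bar{\nabla}\omega=0$; writing $J$ for the field of endomorphisms determined by $\bar{g}(J\,\cdot\,,\cdot\,)=\omega$, this says precisely that the almost-Hermitian structure $(\bar{g},J,\omega)$ on $M\times\mathbb{R}$ is K\"ahler (cf.\ \cite{Bou1}). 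A direct computation with the explicit form of $(\Lambda,E)^{\sharp}$ and of $G$, using the contact pseudo-metric identities $g(\phi X,Y)=(d\eta)(X,Y)$ and $g(\phi X,\phi Y)=g(X,Y)-\varepsilon\langle\eta,X\rangle\langle\eta,Y\rangle$, then identifies $\bar{g}$ with the (pseudo-Riemannian) metric cone over $(M,g)$, namely $\bar{g}\big(X+f\tfrac{\partial}{\partial t},\,Y+h\tfrac{\partial}{\partial t}\big)=e^{t}\big(g(X,Y)+\varepsilon fh\big)$, and identifies $J$ with the almost complex structure attached to the almost contact structure $(\varepsilon\phi,\xi,\eta)$; the factor $\varepsilon$ is immaterial here, since the Nijenhuis torsion of $\varepsilon\phi$ equals that of $\phi$, so $(\varepsilon\phi,\xi,\eta)$ is normal if and only if $(\phi,\xi,\eta)$ is.

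Now the conclusion. As $\omega$ is exact, hence closed, the almost-Hermitian structure $(\bar{g},J,\omega)$ is K\"ahler if and only if $J$ is integrable, i.e.\ if and only if $(\phi,\xi,\eta)$ is normal; and since $(\phi,\xi,\eta,g)$ is assumed to be a contact pseudo-metric structure, normality is by definition equivalent to $(\phi,\xi,\eta,g)$ being a Sasakian pseudo-metric structure. Chaining these equivalences with those of the previous two steps proves the theorem.

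I expect the main obstacle to be the explicit identifications in the K\"ahler reformulation: producing the formula for $(\Lambda,E)^{\sharp}$ (equivalently, for the Lie algebroid $(TM\oplus\mathbb{R})^{*}_{(\Lambda,E),(0,1)}$ associated with $\varepsilon\eta$) and then tracking the factors $e^{\pm t}$, the signs $\varepsilon$, and the various $\tfrac12$'s so that $\bar{g}$, $\omega$ and $J$ come out exactly as claimed, with $J^{2}=-\mathrm{id}$ and $\bar{g}(J\,\cdot\,,\cdot)=\omega$. The fact, in the conclusion step, that the metric cone is K\"ahler precisely over a Sasakian pseudo-metric manifold can be quoted from \cite{CalPer} or re-derived by inserting the O'Neill-type formulas for $\bar{\nabla}$ into $\bar{\nabla}J=0$ and comparing with the characterization $(\nabla_{X_1}\phi)X_2=-\tfrac12 g(X_1,X_2)\xi+\tfrac12\varepsilon\langle\eta,X_2\rangle X_1$ recalled above; either way it is routine but lengthy. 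Alternatively, one may avoid Theorem \ref{main thm} altogether and work directly on $(TM\oplus\mathbb{R},(0,1))$: compute $(\Lambda,E)^{\sharp}$ and $G^{*}$ in terms of $(\phi,\xi,\eta,g)$, write $D^{(\Lambda,E),(0,1)}$ via the Koszul formula, and check that the Jacobi-algebroid compatibility identity collapses to the same characterization of the Sasakian pseudo-metric condition; this uses only Section \ref{Preliminaries} but is more computational.
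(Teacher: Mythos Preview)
Your proposal is correct, and in fact the direct alternative you sketch at the end is exactly what the paper does: it expands the compatibility condition on $(TM\oplus\mathbb{R},(0,1))$ via the Koszul formula and reduces it to
\[
\varepsilon g\!\left((\nabla_{X_1}\phi)X_2+\tfrac12 g(X_1,X_2)\xi-\tfrac12\varepsilon\langle\eta,X_2\rangle X_1,\;X_3\right)
+h_2\, g\!\left(\nabla_{X_1}\xi-\tfrac12\varepsilon\phi X_1,\;X_3\right)
-\tfrac12\varepsilon h_3\,(\mathcal{L}_\xi g)(X_1,X_2)=0,
\]
which is visibly equivalent to the Sasakian characterization $(\nabla_{X_1}\phi)X_2=-\tfrac12 g(X_1,X_2)\xi+\tfrac12\varepsilon\langle\eta,X_2\rangle X_1$ together with its consequences $\nabla_X\xi=\tfrac12\varepsilon\phi X$ and $\mathcal{L}_\xi g=0$. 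Your main route instead runs the logic of the paper's Corollary~\ref{main thm}-based argument in reverse: the paper first proves Theorem~\ref{Sasakian pseudo-metric} by this direct computation and then combines it with Theorem~\ref{main thm} to \emph{deduce} the classical ``Sasakian $\Leftrightarrow$ K\"ahler cone'' statement, whereas you take that classical fact from~\cite{CalPer} as input and feed it through Theorem~\ref{main thm} to obtain Theorem~\ref{Sasakian pseudo-metric}. Your approach is more conceptual and explains \emph{why} the compatibility condition should single out Sasakian structures, but it imports an external result and, as you note, requires carefully tracking the isometry $(\Pi^{\sharp})_{*}\widetilde{G}^{*}=e^{t}G$ and the identification of $J$ (including the $\varepsilon$'s, which is where the phrase ``dual metric with respect to $(\Lambda,E)$'' matters). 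The paper's approach is self-contained and yields the cone statement as a corollary rather than as an assumption.
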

\begin{proof}
From a direct calculation, the condition that $((\Lambda,E),G^*)$ is compatible on $(TM\oplus \mathbb{R}, (0,1))$ is described as follows:
\begin{align*}
&\varepsilon g\left(\phantom{\frac{1}{2}}\!\!\!\!(\nabla_{X_1}\phi)X_2+\frac{1}{2}g(X_1,X_2)\xi-\frac{1}{2}\varepsilon\langle\eta,X_2\rangle X_1,X_3\right)\\
&\qquad +h_2g\left(\nabla_{X_1}\xi-\frac{1}{2}\varepsilon\phi X_1,X_3\right)-\frac{1}{2}\varepsilon h_3(\mathcal{L}_\xi g)(X_1,X_2)=0
\end{align*}
for any $X_1,X_2, X_3 \in \mathfrak{X}(M)$ and any $h_2,h_3 \in C^{\infty}(M)$.

The condition above is actually equivalent to that $(\phi,\xi,\eta,g)$ is a Sasakian pseudo-metric structure on $M$.
\end{proof}

Because of Theorem \ref{Sasakian pseudo-metric}, a Jacobi structure with a compatible cometric is considered as a generalization of a Sasakian pseudo-metric structure.

From Theorem \ref{main thm}, the condition that $((\Lambda,E),G^*)$ is compatible on $(TM\oplus \mathbb{R}, (0,1))$ is equivalent to 
that the pair $(e^{-t}\pi, e^{-t}G^*)$ is compatible on the Lie algebroid $T(M\times \mathbb{R})$, 
where $\pi \in \Gamma (\Lambda ^2(TM\oplus \mathbb{R}))$ is the Jacobi structure on $TM\oplus \mathbb{R}$ corresponding to $(\Lambda,E)$.  
The following well known fact is recovered from this observation and Theorem \ref{Sasakian pseudo-metric} immediately.

\begin{corollary}
A quadruple $(\phi,\xi,\eta,g)$ is a Sasakian structure on $M$ if and only if $(d(e^t\eta), J, e^tG)$ is a K\"{a}hler structure on $M\times \mathbb{R}$.
\end{corollary}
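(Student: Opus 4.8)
The plan is to combine Theorem~\ref{Sasakian pseudo-metric}, Theorem~\ref{main thm}, and Boucetta's characterization of K\"ahler structures among non-degenerate compatible Poisson--cometric pairs on manifolds. First I would reduce to the Riemannian setting, which is where both sides of the stated equivalence live: in the ``only if'' direction $g$ is Riemannian by the definition of a Sasakian structure, and in the ``if'' direction $e^tG$ must be Riemannian for $(d(e^t\eta),J,e^tG)$ to be K\"ahler, which forces $G$, hence $g$, to be Riemannian; thus $q=0$ and $\varepsilon=1$, so the contact form $\varepsilon\eta$ is just $\eta$. Let $(\Lambda,E)$ be the Jacobi structure on $M$ corresponding to $\eta$ and $\pi\in\Gamma(\Lambda^2(TM\oplus\mathbb{R}))$ the corresponding Jacobi structure on $(TM\oplus\mathbb{R},(0,1))$; since a contact structure corresponds to a non-degenerate Jacobi structure (Section~\ref{Jacobi algebroids and Jacobi structures}), $\pi$ is non-degenerate, and its Poissonization on $T(M\times\mathbb{R})$ is the non-degenerate Poisson structure $\Pi=e^{-t}\bigl(\Lambda+\tfrac{\partial}{\partial t}\wedge E\bigr)$.

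Next I would assemble the chain of equivalences. By Theorem~\ref{Sasakian pseudo-metric}, $(\phi,\xi,\eta,g)$ is Sasakian if and only if $((\Lambda,E),G^*)$ is compatible on $(TM\oplus\mathbb{R},(0,1))$, and by Theorem~\ref{main thm} this holds if and only if $(\Pi,e^{-t}G^*)$ is compatible on the Lie algebroid $T(M\times\mathbb{R})$, i.e.\ compatible in the sense of Boucetta (Definition~\ref{Boucetta}) on the Poisson manifold $(M\times\mathbb{R},\Pi)$. It then remains to recognize the geometric objects attached to $\Pi$ and its dual cometric: since $\Pi$ is non-degenerate it corresponds to the symplectic form $\omega_\Pi$ with $\omega_\Pi^\flat=-(\Pi^\sharp)^{-1}$, and a direct computation from the formulas $\Lambda(\alpha,\beta)=(d\eta)((\eta^\flat)^{-1}\alpha,(\eta^\flat)^{-1}\beta)$, $E=\xi$ identifies $\omega_\Pi$ with the symplectization $d(e^t\eta)$; likewise the cometric $e^{-t}G^*$ is precisely the metric dual to $e^tG$ on $M\times\mathbb{R}$. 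By Boucetta's theorem (the Remark following Definition~\ref{Boucetta}) together with its converse, $(\Pi,e^{-t}G^*)$ is compatible if and only if $(\omega_\Pi,e^tG)$ underlies a K\"ahler structure, with complex structure the one built from $\omega_\Pi$ and $e^tG$; checking that this complex structure is exactly $J$ (and recalling that normality of $(\phi,\xi,\eta)$ means precisely integrability of $J$) yields the corollary.

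The main obstacle is this last identification, i.e.\ the bookkeeping that, under the paper's sign and wedge-product conventions, $\omega_\Pi=d(e^t\eta)$ and the $e^tG$-compatible complex structure produced by Boucetta's construction coincides with the almost complex structure $J$ determined by $(\phi,\xi,\eta)$, rather than some other $g$-orthogonal almost complex structure compatible with $\omega_\Pi$. Concretely one must verify that the way $(\Lambda,E)$ encodes $\eta$ and $G$ encodes $g$ forces $J(\Pi^\sharp)^{-1}$-data to be built from $\phi$; this is a finite linear-algebra check on $T(M\times\mathbb{R})=TM\oplus\mathbb{R}\frac{\partial}{\partial t}$. Everything else --- non-degeneracy, duality of $e^tG$ and $e^{-t}G^*$, and the equivalences themselves --- is formal, following at once from Theorem~\ref{Sasakian pseudo-metric}, Theorem~\ref{main thm}, and the cited result of Boucetta. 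Finally, since each step is an ``if and only if'', both directions of the corollary follow simultaneously.
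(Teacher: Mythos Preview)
Your proposal is correct and follows precisely the approach the paper sketches: combine Theorem~\ref{Sasakian pseudo-metric}, Theorem~\ref{main thm}, and Boucetta's K\"ahler characterization from the Remark after Definition~\ref{Boucetta}. The paper in fact gives no detailed proof of the corollary beyond stating that it is ``recovered immediately'' from this chain of equivalences, so your more explicit treatment of the identifications $\omega_\Pi=d(e^t\eta)$, the duality between $e^tG$ and $e^{-t}G^*$, and the recognition of $J$ as the resulting complex structure actually fills in what the paper leaves implicit.
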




\end{document}